\newtheorem{theorem}{Theorem}
\newtheorem{lemma}{Lemma}
\theoremstyle{definition}
\theoremstyle{remark}
\newtheorem{obs}{Observation}
\newcommand{\N}{\ensuremath{\mathbb{N}}}
\newcommand{\R}{\ensuremath{\mathbb{R}}}
\newcommand{\Lon}{\ensuremath{\mathcal{L}}}
\newcommand{\e}{\varepsilon}
\newcommand{\ff}{\left|}
\newcommand{\rr}{\right|}
\title{Nonlinear second order oscillators off resonance at certain functional spaces}
\author{Adolfo Arroyo Rabasa} 
\date{}
\begin{document}

\maketitle

\begin{abstract} We will deal with the existence of  odd and $T$-periodic solutions of the scalar equation
\begin{equation}u'' + g(u) = k(t),\label{prob}\end{equation}
where $g : \R \to \R$ is and odd function and $k$ is an odd and $T$-periodic function of mean zero. By putting (\ref{prob}) in means of
$$Lu = Nu,$$
where $Lu = u''$ is the linear part and $Nu = k - g(u)$ is the nonlinear part, generally if one denotes by $P_T$ the continuous $T$-periodic functions and by $Q_T$ the continuous $T$-periodic functions of mean zero, then
$$L : P_T\cap{C^2} \to P_T$$
and one have that $Ker(L) = \R$ and $Rank(L) = Q_T$, which is clearly a resonant problem. We will consider the space of odd and $T$-periodic functions where one can avoid resonace. In this space we state two results of existence, one including a priori bounds and one of uniqueness. This results generalize the results obtained by Hamel in \cite{hamel} on the periodic problem for the forced pendulum equation
\end{abstract}

\section{\sc Observations at resonace}

\noindent We denote $\mathcal{H}$ the set of continuous, odd and $T$-periodic functions.
$$\mathcal{H} := \{u \in P_T \, | \, u(t) = -u(-t) \,\, \forall \, t \in [0,T]\}.$$

\begin{obs}$\mathcal{H}$ is a complete normed space.
\end{obs}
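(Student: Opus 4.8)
The plan is to realize $\mathcal{H}$ as a closed linear subspace of the Banach space $(P_T,\|\cdot\|_\infty)$ of continuous $T$-periodic functions with the uniform norm $\|u\|_\infty=\max_{t\in[0,T]}|u(t)|$, and then invoke the standard fact that a closed subspace of a complete normed space is itself complete. Accordingly, I would first equip $\mathcal{H}$ with the norm inherited from $P_T$. That $(P_T,\|\cdot\|_\infty)$ is a Banach space is classical: a uniformly Cauchy sequence of continuous functions converges uniformly to a continuous function, and the identity $u_n(t+T)=u_n(t)$ passes to the pointwise limit, so the limit is again $T$-periodic; the maximum defining the norm is attained by continuity together with periodicity.

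Next I would check that $\mathcal{H}$ is a linear subspace of $P_T$: it is nonempty (it contains $0$, and e.g. $t\mapsto \sin(2\pi t/T)$), and for $u,v\in\mathcal{H}$ and $\lambda,\mu\in\R$ the function $\lambda u+\mu v$ is continuous and $T$-periodic, while $(\lambda u+\mu v)(-t)=\lambda u(-t)+\mu v(-t)=-\lambda u(t)-\mu v(t)=-(\lambda u+\mu v)(t)$, so $\lambda u+\mu v\in\mathcal{H}$. The only step with any real content is the closedness of $\mathcal{H}$ in $P_T$: if $(u_n)\subset\mathcal{H}$ and $u_n\to u$ uniformly, then $u\in P_T$, and for every $t\in[0,T]$ pointwise convergence gives $u_n(t)\to u(t)$ and $u_n(-t)\to u(-t)$; since $u_n(-t)=-u_n(t)$ for all $n$, letting $n\to\infty$ yields $u(-t)=-u(t)$, so $u\in\mathcal{H}$.

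Finally, since $\mathcal{H}$ is a closed subspace of the complete space $P_T$, any Cauchy sequence in $\mathcal{H}$ is Cauchy in $P_T$, hence converges in $P_T$ to a limit which, by closedness, lies in $\mathcal{H}$; thus $\mathcal{H}$ is complete. I do not anticipate any genuine obstacle here — the statement is essentially a soft-analysis exercise — and the only point that needs an argument rather than a mere definition-check is the stability of oddness under uniform limits, which is precisely what upgrades "normed" to "complete."
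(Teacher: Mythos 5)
Your proof is correct: the paper states this observation without proof, and your argument (that $\mathcal{H}$ is a linear subspace of the Banach space $(P_T,\|\cdot\|_\infty)$ which is closed because oddness passes to pointwise, hence uniform, limits) is exactly the standard justification the paper implicitly relies on. No gaps; the key step — stability of $u(-t)=-u(t)$ under the limit — is handled properly.
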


\begin{lemma}\label{l2}Let $u \in P_T$ be an odd function, then $\overline{u} := {1\over T}\int_0^T u \, dt = 0$ 
\end{lemma}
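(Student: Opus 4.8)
The plan is to exploit the oddness of $u$ together with the $T$-periodicity to show the integral over a full period vanishes. First I would split the integral $\int_0^T u\,dt$ at the midpoint as $\int_0^{T/2} u\,dt + \int_{T/2}^T u\,dt$, since the symmetry $u(t) = -u(-t)$ is most naturally applied around the origin, and the periodicity lets me recenter. Concretely, in the second integral I would substitute $s = t - T$, turning $\int_{T/2}^T u(t)\,dt$ into $\int_{-T/2}^0 u(s+T)\,ds = \int_{-T/2}^0 u(s)\,ds$ by $T$-periodicity. Then the whole integral becomes $\int_{-T/2}^{T/2} u(s)\,ds$.

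The next step is to observe that this integral of an odd function over the symmetric interval $[-T/2, T/2]$ is zero: substituting $s \mapsto -s$ in $\int_{-T/2}^0 u(s)\,ds$ gives $\int_0^{T/2} u(-s)\,ds = -\int_0^{T/2} u(s)\,ds$, which cancels the $\int_0^{T/2} u(s)\,ds$ term. Hence $\overline{u} = \frac{1}{T}\int_{-T/2}^{T/2} u\,ds = 0$.

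I do not anticipate a genuine obstacle here; the only point requiring a small amount of care is the justification that one may recenter the interval of integration for a $T$-periodic function — that is, $\int_a^{a+T} u\,dt$ is independent of $a$ — which is a standard fact that follows by differentiating in $a$ (or by the substitution argument above) and uses only continuity and periodicity of $u$. One should also note that $u(0) = 0$ follows from oddness (setting $t = 0$ in $u(t) = -u(-t)$), though this is not strictly needed for the computation. I would present the argument in the clean form: $\int_0^T u\,dt = \int_{-T/2}^{T/2} u\,dt = 0$, with the two displayed substitutions spelled out in one or two lines.
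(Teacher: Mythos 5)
Your proof is correct, but it takes a different route from the paper. You recenter the integral via $T$-periodicity, writing $\int_0^T u\,dt = \int_{-T/2}^{T/2} u\,dt$, and then kill the symmetric integral by the substitution $s \mapsto -s$ on the half-interval $[-T/2,0]$. The paper instead decomposes the symmetric interval into the sets $\Omega^+ = \{t : u(t)>0\}$ and $\Omega^- = \{t : u(t)<0\}$, proves $\Omega^+ = -\Omega^-$ from oddness, and then shows the two contributions cancel after a change of variables over these sets. Your argument is the more standard and leaner one: it needs only the change of variables on an interval and makes explicit the recentering step $\int_0^T = \int_{-T/2}^{T/2}$, which the paper uses silently (its first display already integrates over subsets of $[-T/2,T/2]$ although $\overline{u}$ is defined via $\int_0^T$). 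The paper's sign-set decomposition buys nothing extra here and requires slightly more care (substitution over general sets, and the set where $u=0$ is tacitly discarded); your version avoids those points entirely, so if anything it is the tighter write-up of the same elementary fact.
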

\begin{proof}Let
$$\Omega^+ := \{t \in [-T/2,T/2] : u(t) > 0\},$$
$$\Omega^- := \{t \in [-T/2,T/2] : u(t) < 0\}.$$\\
If $t \in \Omega^+ \, \Rightarrow u(t) > 0 \, \Rightarrow -u(-t) > 0 \, \Rightarrow u(-t) < 0 \, \Rightarrow -t \in \Omega^-$,
therefore $-\Omega^+ \subseteq \Omega^-$. In a similar way one proves that $-\Omega^- \subseteq \Omega^+$ and consequently
\begin{equation}\Omega^+ = -\Omega^-.\label{1}\end{equation}\\

From (\ref{1}), it follows that
\begin{equation}\overline{u} = {1 \over T}\bigg(\int_{\Omega^+} u(t) \, \text{d}t + \int_{\Omega^-} u(t) \, \text{d}t\bigg) = {1 \over T}\bigg(\int_{\Omega^-} u(-t) \, \text{d}t + \int_{\Omega^-} u(t) \, \text{d}t\bigg).\label{2}\end{equation}\\

Finally, the odd property of $u$ in (\ref{2}) implies
$$\overline{u} = {1 \over T}\bigg(-\int_{\Omega^-} u(t) \, \text{d}t + \int_{\Omega^-} u(t) \, \text{d}t\bigg) = 0.$$
\end{proof}

\begin{obs}$\mathcal{H} \subset Q_T$.
\end{obs}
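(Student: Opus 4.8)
The plan is to obtain this inclusion as an immediate corollary of Lemma \ref{l2}. First I would take an arbitrary $u \in \mathcal{H}$. Unwinding the definition of $\mathcal{H}$, such a $u$ is continuous, $T$-periodic, and satisfies $u(t) = -u(-t)$ for all $t \in [0,T]$; in particular $u \in P_T$ and $u$ is odd, so the hypotheses of Lemma \ref{l2} are fulfilled. Applying that lemma gives $\overline{u} = \frac{1}{T}\int_0^T u \, dt = 0$, which is precisely the condition defining membership in $Q_T$. Since $u$ was arbitrary, this yields $\mathcal{H} \subseteq Q_T$.

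To justify that the inclusion is proper (as the symbol $\subset$ indicates), I would exhibit one explicit function lying in $Q_T \setminus \mathcal{H}$. A natural choice is $v(t) := \cos(2\pi t/T)$: it is continuous and $T$-periodic with $\int_0^T v \, dt = 0$, hence $v \in Q_T$, yet $v$ is even rather than odd, so $v \notin \mathcal{H}$. This shows $\mathcal{H} \subsetneq Q_T$.

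I do not expect any real obstacle: the entire content is carried by Lemma \ref{l2}, and the only points requiring a line of care are checking that an element of $\mathcal{H}$ literally meets the ``$u \in P_T$ and odd'' hypothesis of that lemma and — if one wants the strict inclusion — verifying the properties of the witness $v$. If one only needs $\mathcal{H} \subseteq Q_T$, the second paragraph can be omitted entirely.
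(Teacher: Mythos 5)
Your proposal is correct and follows exactly the paper's intended route: the observation is an immediate corollary of Lemma \ref{l2}, since any $u \in \mathcal{H}$ is continuous, $T$-periodic and odd, hence has mean zero and lies in $Q_T$. The extra paragraph exhibiting $\cos(2\pi t/T)$ to show the inclusion is strict is fine but not needed for the paper's purposes.
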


\begin{lemma}\label{l1}$L|_\mathcal{H} : dom(L)\cap\mathcal{H} \to \mathcal{H}$ is invertible.
\end{lemma}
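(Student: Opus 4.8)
The plan is to prove that $L|_{\mathcal{H}}$ is a bijection of $dom(L)\cap\mathcal{H}$ onto $\mathcal{H}$ by constructing an explicit double primitive, and then (if ``invertible'' is meant in the topological sense) to observe that the resulting formula exhibits a continuous inverse. First I would dispose of injectivity: if $u\in dom(L)\cap\mathcal{H}$ and $Lu=u''=0$, then $u(t)=at+b$; $T$-periodicity forces $a=0$, and oddness forces $u(0)=0$, hence $b=0$. Thus $Ker(L|_{\mathcal{H}})=\{0\}$.

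For surjectivity, fix $h\in\mathcal{H}$. Since $h$ is odd, Lemma \ref{l2} gives $\overline{h}=0$, so the first primitive $v(t):=\int_0^t h(s)\,\mathrm{d}s$ satisfies $v(T)=\int_0^T h=0=v(0)$; hence $v$ is $T$-periodic, of class $C^1$, with $v'=h$. Performing the substitution $s\mapsto -s$ and using $h(-s)=-h(s)$ shows $v(-t)=v(t)$, i.e. $v$ is even. I then set
$$u(t):=\int_0^t\big(v(s)-\overline{v}\big)\,\mathrm{d}s,$$
which is of class $C^2$ and satisfies $u''=v'=h$. Moreover $u(T)-u(0)=\int_0^T(v-\overline{v})\,\mathrm{d}s=0$, so $u$ is $T$-periodic; and since $v-\overline{v}$ is even, its primitive vanishing at $0$ is odd, so $u\in\mathcal{H}$. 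Hence $u\in dom(L)\cap\mathcal{H}$ with $Lu=h$, proving surjectivity. Combined with injectivity, $L|_{\mathcal{H}}$ is a bijection; the explicit formula for $L|_{\mathcal{H}}^{-1}h$ in terms of iterated integrals of $h$ then bounds $\|L|_{\mathcal{H}}^{-1}h\|_\infty$ by a constant times $\|h\|_\infty$, giving continuity of the inverse.

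The main obstacle — really the only subtle point — is that two normalizations have to be reconciled simultaneously: subtracting $\overline{v}$ is forced in order to make the second primitive $T$-periodic, while the additive constant of integration is pinned down by the requirement that $u$ be odd, and a priori there is no freedom left to satisfy both. The parity computation showing $v$ (and hence $v-\overline{v}$) is even is exactly what guarantees compatibility: it makes the periodicity-normalized primitive automatically odd, with no residual constant to adjust. I would also take care to verify the bookkeeping that $dom(L)=P_T\cap C^2$ and that the constructed $u$ genuinely lies in it, so that $L|_{\mathcal{H}}$ is well defined as a map into $\mathcal{H}$ (using Lemma \ref{l2} and Observation 2 to place $\mathcal{H}$ correctly relative to $Q_T$).
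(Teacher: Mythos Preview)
Your argument is correct and reaches the same conclusion as the paper, but by a more explicit and self-contained route. The paper first checks well-definedness by showing, via difference quotients, that the second derivative of an odd function is odd (so $L$ sends $dom(L)\cap\mathcal{H}$ into $\mathcal{H}$); it then \emph{cites} from the literature a right inverse $S:Q_T\to P_T$ of $L$ satisfying $\|S(f)\|_\infty\le (T^2/2)\|f\|_\infty$, asserts $S(\mathcal{H})\subseteq dom(L)\cap\mathcal{H}$, and finishes by observing $Ker(L|_{\mathcal{H}})=\{0\}$. You instead build the inverse by hand through the iterated primitive $h\mapsto v\mapsto u$ with the mean correction, and your parity computation (odd $h$ $\Rightarrow$ even $v$ $\Rightarrow$ odd $u$) is exactly the content the paper leaves unproved when it claims $S$ preserves $\mathcal{H}$. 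What your approach buys is independence from the external reference and a transparent explanation of why the periodicity and oddness normalizations are simultaneously satisfiable; what the paper's approach buys is the ready-made operator-norm bound $\|L^{-1}\|_{\Lon}\le T^2/2$, which is invoked later in Theorem~2. One small gap in your write-up: you never separately verify that $u\in dom(L)\cap\mathcal{H}$ forces $u''\in\mathcal{H}$ (the codomain check), though this is the easy direction and follows by differentiating $u(-t)=-u(t)$ twice.
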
From the above observation it is clear that $L(\mathcal{H}) \subset P_T$, therefore is necessary to prove that $Lu$ is an odd function for all odd functions $u$.
\begin{proof}For $u'$, we have
\begin{equation}u'(t) = \lim_{h \to 0} {u(t) - u(t+h) \over h} = \lim_{h \to 0} {u(-t) - u(-t+-h) \over -h} = u'(-t),\label{3}\end{equation}
it follows that $u'$ is an even function. Using (\ref{3}) in $u''$ it follows that
$$u''(t) = \lim_{h \to 0} {u'(t) - u'(t+h) \over h} = -\lim_{h \to 0} {u'(-t) - u'(-t+-h) \over -h} = -u''(-t).$$
This proves that $L|_\mathcal{H} : dom(L)\cap\mathcal{H} \to \mathcal{H}$ is well defined. It is well known from \cite{Lazer} the existence of an integral operator $S : Q_T \to P_T$, a right inverse of $L$ such that 
$$\|S(f)\|_\infty \le {T^2 \over 2}\|f\|_\infty$$
and $S(\mathcal{H}) \subseteq dom(L)\cap\mathcal{H}$, for it is sufficient to observe that $Ker(L|_\mathcal{H}) = \{0\}$.
\end{proof}
\noindent Lemma \ref{l1} tells us that (\ref{prob}) is a non-resonant problem in $\mathcal{H}$. Naturally, the question arises to answer when does $Nu \in \mathcal{H}$, in order to express any solution of (\ref{prob}) as a classic fixed point problem of the form
$$u = L^{-1}Nu.$$
From now on, let us put $L := L|_\mathcal{H}$.

\section{\sc Existence of solutions in $\mathcal{H}$ in the sublinear case}

Since odd functions form are closed under composition we get
$$\overline{g(u)} = 0,$$
and even more
$$Nu = k - g(u) \in \mathcal{H}$$
if $g$ is an odd function. The latter discussion lead us to our first result.

\begin{theorem}Let $g : \R \to \R$ an odd sublinear function and $k \in Q_T$ an odd function, then equation 
$$u'' + g(u) = k(t)$$
has an odd and $T$-periodic solution. Even more, the set of solutions is bounded.
\end{theorem}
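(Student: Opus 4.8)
The plan is to reformulate the problem as a fixed-point problem and apply Schauder's fixed-point theorem. By the discussion preceding the theorem, since $g$ is odd, the operator $Nu = k - g(u)$ maps $\mathcal{H}$ into $\mathcal{H}$, and by Lemma \ref{l1} the operator $L = L|_\mathcal{H}$ is invertible with right inverse $S$ satisfying $\|S(f)\|_\infty \le \tfrac{T^2}{2}\|f\|_\infty$. Hence solving $(\ref{prob})$ in $\mathcal{H}$ is equivalent to finding a fixed point of the map $\Phi : \mathcal{H} \to \mathcal{H}$ defined by $\Phi(u) = S(k - g(u))$. First I would verify that $\Phi$ is continuous (this follows from continuity of $g$, hence of the Nemytskii operator $u \mapsto g(u)$ on $C([0,T])$, composed with the bounded linear operator $S$) and compact (here $S$ maps bounded sets of $\mathcal{H}$ into sets that are bounded in $C^2$, hence equicontinuous and relatively compact in $\mathcal{H}$ by Arzel\`a--Ascoli).

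Next I would exploit sublinearity of $g$ to produce an invariant ball. Sublinear should mean $\lim_{|s|\to\infty} g(s)/s = 0$, equivalently for every $\e > 0$ there is $C_\e \ge 0$ with $|g(s)| \le \e|s| + C_\e$ for all $s \in \R$. Then for $u \in \mathcal{H}$,
$$\|\Phi(u)\|_\infty \le \frac{T^2}{2}\big(\|k\|_\infty + \e\|u\|_\infty + C_\e\big).$$
Choosing $\e$ so small that $\tfrac{T^2}{2}\e < 1$, say $\e = T^{-2}$, and then $R$ large enough that $\tfrac{T^2}{2}(\|k\|_\infty + C_\e) \le R/2$ and $\tfrac{T^2}{2}\e R \le R/2$, we get $\|\Phi(u)\|_\infty \le R$ whenever $\|u\|_\infty \le R$. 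Thus the closed ball $\overline{B}_R = \{u \in \mathcal{H} : \|u\|_\infty \le R\}$, which is convex, closed and nonempty, is mapped into itself by the continuous compact map $\Phi$. Schauder's theorem then yields a fixed point $u \in \overline{B}_R$, which is the desired odd, $T$-periodic solution.

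For the final assertion that the solution set is bounded, I would argue directly: if $u \in \mathcal{H}$ is any solution, then $u = \Phi(u)$, so the same estimate gives $\|u\|_\infty \le \tfrac{T^2}{2}(\|k\|_\infty + \e\|u\|_\infty + C_\e)$, and with $\tfrac{T^2}{2}\e \le \tfrac12$ this rearranges to $\|u\|_\infty \le T^2(\|k\|_\infty + C_\e)$, a bound independent of the particular solution. Hence the set of solutions is contained in $\overline{B}_{R_0}$ with $R_0 = T^2(\|k\|_\infty + C_\e)$.

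The main obstacle I anticipate is purely a matter of hypotheses rather than of technique: the proof hinges on having the right quantitative form of ``sublinear,'' and on the constant $\tfrac{T^2}{2}$ in the estimate for $S$ being the operative one on $\mathcal{H}$ (so that the absorption $\tfrac{T^2}{2}\e < 1$ is available for all small $\e$). If instead ``sublinear'' only meant $g$ bounded, the argument simplifies (one does not even need the absorption step); if it meant something weaker, one would need a more careful growth bound. Verifying compactness of $\Phi$ is routine via Arzel\`a--Ascoli since $S$ produces $C^2$ regularity, and checking that everything stays within the odd subspace is already guaranteed by Lemma \ref{l1} and the oddness of $g$ and $k$.
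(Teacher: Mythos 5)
Your proof is correct, and it takes a recognizably different route from the paper's, even though both hinge on the same quantitative ingredient (the sublinear growth bound $|g(s)|\le \e|s|+C_\e$ played against the constant $\tfrac{T^2}{2}$ for $S=L^{-1}$ on $\mathcal{H}$). The paper invokes Sch\"afer's theorem: it considers the homotopy set $\Sigma=\{u\in\mathcal{H}: u=\lambda L^{-1}Nu,\ \lambda\in(0,1]\}$, assumes it unbounded, and derives a contradiction by a limit manipulation in $\lambda_n$ (a step that is actually shaky as written; the clean way to finish that argument is precisely your absorption inequality, which shows $\|u_n\|_\infty$ is bounded once $\e\|L^{-1}\|_{\mathcal{L}}<1$, without any claim about $\lim\lambda_n$). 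You instead apply Schauder's theorem directly: you verify continuity of the Nemytskii operator and compactness of $\Phi=S\circ N$ via Arzel\`a--Ascoli (the paper leaves these tacit, as they are implicit in using Sch\"afer), and you produce an explicit invariant ball $\overline{B}_R$ by choosing $\e$ with $\tfrac{T^2}{2}\e\le\tfrac12$. What your version buys is an explicit radius and, more importantly, a genuine a priori bound $\|u\|_\infty\le T^2(\|k\|_\infty+C_\e)$ for \emph{every} solution in $\mathcal{H}$ (using $Ker(L|_\mathcal{H})=\{0\}$ to see that any solution satisfies $u=\Phi(u)$), which proves the boundedness assertion directly; the paper obtains boundedness only through the Sch\"afer set $\Sigma$ (solutions being the $\lambda=1$ slice). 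What the paper's formulation buys is that no invariant ball need be exhibited, only an a priori estimate along the homotopy -- but since your absorption argument yields that estimate anyway, nothing is lost, and your write-up is the more rigorous of the two. One small point to keep explicit: equicontinuity of $\Phi$-images should be justified through a bound on first derivatives (e.g. $u$ periodic and $u''=f$ bounded give $\|u'\|_\infty\le T\|f\|_\infty$ via Rolle), not merely by saying the image is bounded in $C^2$.
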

\begin{proof}Based on the discussion of the first section and the latter remark it is clear that
$$Nu = k - g(u) \in \mathcal{H},$$
and that $u$ is a solution of (\ref{prob}) if and only if $u$ is a fixed point of
$$K(u) = L^{-1}Nu.$$
From Sch\"afer's theorem it suffices to show that the set
$$\Sigma := \{u \in \mathcal{H} : u = \lambda L^{-1}Nu, \lambda \in (0,1]\}$$
is bounded. Let us suppose the opposite and let us take $(u_n) \subset \Sigma$ and $(\lambda_n) \subset (0,1]$ such that
$$u_n = \lambda_nL^{-1}Nu_n$$ and
$$\|u_n\|_\infty \to \infty \quad \text{ as } n \to \infty.$$
The sublinearity of $g$ implies that, for all $\e > 0$ there exists $M := M(\e)$ such that $g(t) < M + \e t.$ For $n \in \N$ it follows that\\
$$
\begin{aligned}
\|u_n\|_\infty & \le \lambda_n\|L^{-1}\|_\Lon\|k - g(u_n)\|_\infty \\
& \le  \lambda_n\|L^{-1}\|_\Lon(\|k\|_\infty + M(\e) + \e\|u_n\|_\infty) \\
& \le  \lambda_n\e\|L^{-1}\|_\Lon\|u_n\|_\infty + C(\e) \qquad \qquad \qquad \qquad (C(\e) > 0). \\
\end{aligned}
$$\\
Consequently,
$$\infty = \lim_{n \to \infty} \|u_n|\|_\infty \le \lim_{n \to \infty}{C(\e) \over 1 - \lambda_n\e\|L^{-1}\|_\Lon},$$
which in any way means that
$$\lim_{n \to \infty} \lambda_n = {1 \over \e\|L^{-1}\|_\Lon}.$$
By putting $\e < \|L^{-1}\|^{-1}$ we get 
$$\lim_{n \to \infty} \lambda_n > 1$$
which contradicts the fact that $(\lambda_n) \subset (0,1]$.
\end{proof}

\section{\sc Uniqueness of solutions in $\mathcal{H}$ under convexity conditions}

\noindent Before stating the main result let us remember that we first considered $L$ as an operator with domain in $P_T$ and therefore $\|S\|_\Lon = \|L^{-1}\|_\Lon$ depends only on $T$.

\begin{theorem}If $g \in C^1(\R)$ is an odd function such that $\|g'\|_\infty < 2/T^2$ and $k \in Q_T$ is and odd function then there exists a unique solution in $\mathcal{H}$ of equation
$$u'' + g(u) = k(t).$$
Which between lines tells us that the uniqueness depends on the period $T$.
\end{theorem}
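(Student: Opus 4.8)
The plan is to set up the problem as a fixed point equation on $\mathcal{H}$ and prove that the operator $K(u) = L^{-1}Nu = L^{-1}(k - g(u))$ is a contraction, then invoke the Banach fixed point theorem. By Lemma \ref{l1} and the discussion preceding this section, $K$ maps $\mathcal{H}$ into itself (since $k - g(u) \in \mathcal{H}$ whenever $g$ is odd and $u \in \mathcal{H}$), and fixed points of $K$ are exactly the solutions of the equation in $\mathcal{H}$. The completeness of $\mathcal{H}$ as a normed space is guaranteed by Observation 1, so Banach's theorem applies once the contraction estimate is established.

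The key estimate runs as follows. For $u, v \in \mathcal{H}$ I would write
$$\|K(u) - K(v)\|_\infty = \|L^{-1}(g(v) - g(u))\|_\infty \le \|L^{-1}\|_\Lon \, \|g(u) - g(v)\|_\infty.$$
By the mean value theorem applied pointwise, $|g(u(t)) - g(v(t))| \le \|g'\|_\infty |u(t) - v(t)|$ for each $t$, hence $\|g(u) - g(v)\|_\infty \le \|g'\|_\infty \|u - v\|_\infty$. Combining this with the bound $\|L^{-1}\|_\Lon = \|S\|_\Lon \le T^2/2$ from Lemma \ref{l1} gives
$$\|K(u) - K(v)\|_\infty \le \frac{T^2}{2}\|g'\|_\infty \|u - v\|_\infty.$$
The hypothesis $\|g'\|_\infty < 2/T^2$ makes the constant $\frac{T^2}{2}\|g'\|_\infty < 1$, so $K$ is a contraction on $\mathcal{H}$ and therefore has a unique fixed point.

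I do not expect any serious obstacle here; the proof is essentially a direct contraction-mapping argument, and all the analytic ingredients (odd composition stays in $\mathcal{H}$, the norm bound on $S = L^{-1}$, completeness of $\mathcal{H}$) are already in hand from the earlier sections. The only point that needs a word of care is confirming that $K$ genuinely sends $\mathcal{H}$ to $\mathcal{H}$: this requires that $g(u)$ be continuous, $T$-periodic and odd (immediate since $g$ is continuous and odd and $u \in \mathcal{H}$), that $k - g(u)$ have mean zero (true by Lemma \ref{l2}, as it is odd), and that $L^{-1}$ preserve oddness (precisely the content $S(\mathcal{H}) \subseteq dom(L) \cap \mathcal{H}$ from Lemma \ref{l1}). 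Once this membership is checked, the contraction estimate above closes the argument, and the remark that uniqueness depends on $T$ is simply the observation that $\|L^{-1}\|_\Lon = T^2/2$ is a function of $T$ alone.
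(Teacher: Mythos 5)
Your proposal is correct and follows essentially the same route as the paper: reformulate the problem as a fixed point equation for $K(u) = L^{-1}Nu$ on the complete space $\mathcal{H}$, use the Lipschitz bound $\|g(u)-g(v)\|_\infty \le \|g'\|_\infty\|u-v\|_\infty$ together with $\|L^{-1}\|_\Lon \le T^2/2$ to get a contraction constant below $1$, and conclude by Banach's fixed point theorem. The only nitpick is your closing claim that $\|L^{-1}\|_\Lon = T^2/2$; the paper only provides the inequality $\|L^{-1}\|_\Lon \le T^2/2$, which is all the argument needs.
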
 
\begin{proof}
As in Theorem 1, we look for a fixed point of the equation
$$K(u) = L^{-1}Nu \qquad (u \in \mathcal{H}).$$
The fact that $\|g'\|_\infty < 2/T^2$, implies the existence of $\lambda \in (0,1)$ such that
$$\|L^{-1}\|_\Lon\ff g(x) - g(y) \rr < \lambda\ff x - y \rr \qquad \text{ for all } \, x,y \in \R.$$
Then, if $u,v \in \mathcal{H}$ we obtain
$$
\begin{aligned}
\|K(u) - K(v)\|_\infty & \le \|L^{-1}\|_\Lon\| Nu - Nv \|_\infty \\
& = \|L^{-1}\|_\Lon\| g(v) - g(u) \|_\infty \\
& \le \lambda\|v - u\|_\infty = \lambda\|u - v\|_\infty. 
\end{aligned}
$$
Banach's fixed point theorem guarantees the existence and uniqueness of a solution of (\ref{prob}) in $\mathcal{H}$.
\end{proof}

\nocite{Clapp2}
\bibliographystyle{siam}
\bibliography{mybib}

\begin{thebibliography}{1}

\bibitem{Clapp2}
{\sc P.~Amster and M.~Clapp}, {\em Periodic solutions of resonant systems with
  rapidly rotating nonlinearities}, 2010.

\bibitem{hamel}
{\sc H.~G.}, {\em \"Uber erzwungene Schwingungen bei endlichen Amplituden},
  Math. Ann., 86 (1922), 1-13.

\bibitem{Lazer}
{\sc A.~Lazer}, {\em On Schauder's fixed point theorem and forced second-order
  nonlinear oscillations}, J. Math. Anal. Appl. 21 (1968), 421-425.

\end{thebibliography}

\end{document}